\author{Matthias Moßburger}
\title{On a Unified Analysis in the language of preordered sets}
\theoremstyle{plain}
\theoremstyle{nonumberplain}
\newcommand{\N}{\mathbb{N}}
\newcommand{\Q}{\mathbb{Q}}
\newcommand{\R}{\mathbb{R}}
\newcommand{\fin}{\stackrel{\mbox{\scriptsize fin}}{\le}}
\newcommand{\loc}{\stackrel{\mbox{\scriptsize loc}}{\le}}
\newcommand{\tang}{\stackrel{\mbox{\scriptsize tan}}{\le}}
\newcommand{\locae}{\stackrel{\mbox{\scriptsize loc}}{\sim}}
\newcommand{\tangae}{\stackrel{\mbox{\scriptsize tan}}{\sim}}
\newcommand{\fue}{\stackrel{\mu}{\le}}
\begin{document}

{\LARGE On a Unified Analysis in the language of preordered sets}

\vspace{3mm}

\begin{center}
{\large Matthias Moßburger}
\end{center}

\vspace{3mm}

\noindent
\textsc{Abstract.} 
We define a notion which contains numerous basic notions of Analysis as special cases, for example limit, continuity, differential, Riemann and Lebesgue integral, root and exponential functions. Properties like additivity or linearity of all mentioned notions follow from one single theorem. 
A generalisation of Definition 1 contains basic ideas of other theories such as topology, group and measure theory.

\section{Definition and Examples}
\label{sec:definition}

We use the following notions:
A {\em preorder} on a set $M$ is a binary relation which is reflexive and transitive.
A mapping between preordered sets is called {\em isotonic}, if it preserves the given order. Let $(M,\le)$ be a preordered set. An element $m\in M$ is called a {\em lower bound of} $A\subseteq M$, if $m\le a$ for all $a\in A$\,; in this case we write $m\le A$. Similar: {\em upper bound} and $A\le m$\,. 
$(M,\le)$ is called {\em complete}, if every nonempty subset which is bounded from below has an unique infimum and every nonempty subset which is bounded from above has an unique supremum. 
In this case, $\le$ is antisymmetric: If $a\le b \le a$\,, then $a=\inf\lbrace a,b\rbrace=b$\,.

Definition \ref{def:phiAbschluss} contains all notions mentioned at the beginning. The idea behind this definition: Let $K$ be a set of elements $a$ with an ``obvious'' value $\varphi(a)$\,; these values can be used to get a continuation of $\varphi$ through approximations from above and below. 
For example: Constant sequences have an obvious limit. In other words, we have an obvious function $\,\lim: K\longrightarrow\R$ if $K$ is the set of all constant real sequences. But how can we extend the domain of $\lim$?

\begin{definition}
\label{def:phiAbschluss}
Let $L$ and $M$ be preordered sets, $M$ complete, $K\subseteq L$ and
$\varphi:K\longrightarrow M$ isotonic. 
\[\begin{xy}
\xymatrix{
  K \ar[r]^{\subseteq} \ar[rd]_{\varphi} 
  & 
  {\overline K} \ar[r]^{\subseteq} \ar@{.>}[d]^{\varphi^*}
  & 
  L
  \\ 
  & M}
\end{xy}\]
\begin{enumerate}
\renewcommand{\theenumi}{\alph{enumi}}
\renewcommand{\labelenumi}{\rm(\theenumi)}
	\item An element $f\in L$ is called {\em K-bounded}\index{bounded}\,, if there are $a,b\in K$ with $a\le f\le b$\,. For $K$-bounded $f\in L$ let
	\[
	 \varphi_*(f) \ :=\ 
	 \sup \lbrace\, \varphi(a) \,|\, a\in K \mbox{ and } a\le f
	 \,\rbrace
	 \quad \mbox{ and } \quad
	 \varphi^*(f) \ :=\ 
	 \inf \lbrace\, \varphi(b) \,|\, b\in K \mbox{ and } f\le b
	 \,\rbrace \ .
	\]
	\item The set $\overline K$ of all $K$-bounded $f\in L$ with $\varphi_*(f)=\varphi^*(f)$ is called 
	{\em $\varphi$-closure of $K$ in $L$}\index{closure}\,. 
\end{enumerate}
\end{definition}

%
%
%

\begin{theorem}
\label{satz:phiAbschluss}
Let $K,L,M$ and $\varphi$ be as in Definition \ref{def:phiAbschluss}.
Then $\varphi^*: \overline K \longrightarrow M$ is the unique isotonic continuation of $\varphi$ on $\overline K$\,.
\end{theorem}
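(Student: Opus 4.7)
\emph{Proof plan.} The statement bundles three assertions: that $\varphi^*$ is well-defined on $\overline K$ and agrees with $\varphi$ on $K$, that this extension is isotonic, and that it is the only isotonic extension. I would use throughout the antisymmetry of $M$ recorded after the definition of completeness; this is what turns two-sided inequalities into equalities and makes ``uniqueness'' unambiguous.

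For the extension property, every $a\in K$ satisfies $a\le a$, so $a$ is $K$-bounded, and $\varphi(a)$ lies in both sets defining $\varphi_*(a)$ and $\varphi^*(a)$, giving $\varphi_*(a)\ge\varphi(a)\ge\varphi^*(a)$. Conversely, the isotonicity of $\varphi$ makes $\varphi(a)$ an upper bound of $\{\varphi(c):c\in K,\ c\le a\}$ and a lower bound of $\{\varphi(b):b\in K,\ a\le b\}$, so the reverse inequalities also hold. Antisymmetry then forces $\varphi_*(a)=\varphi^*(a)=\varphi(a)$; hence $K\subseteq\overline K$ and $\varphi^*|_K=\varphi$.

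For isotonicity, if $f\le g$ in $\overline K$, transitivity shows $\{\varphi(b):b\in K,\ g\le b\}\subseteq\{\varphi(b):b\in K,\ f\le b\}$, and infima reverse inclusions, giving $\varphi^*(f)\le\varphi^*(g)$. For uniqueness I would rely on a sandwich argument: given any isotonic extension $\psi:\overline K\to M$, for $f\in\overline K$ and any $a,b\in K$ with $a\le f\le b$, isotonicity of $\psi$ together with $\psi|_K=\varphi$ forces $\varphi(a)\le\psi(f)\le\varphi(b)$. Taking the supremum over such $a$ and the infimum over such $b$ yields $\varphi_*(f)\le\psi(f)\le\varphi^*(f)$, and the defining equality $\varphi_*(f)=\varphi^*(f)$ together with antisymmetry collapses this to $\psi(f)=\varphi^*(f)$.

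There is no serious obstacle here; the argument is pure bookkeeping. The only care needed is to keep inequality directions straight (infimum of a larger set is smaller), to remember that $K$-boundedness guarantees that all the relevant sets are non-empty and bounded so the suprema and infima exist in the complete lattice $M$, and to invoke antisymmetry only at the very end.
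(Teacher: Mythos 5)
Your proposal is correct and follows essentially the same route as the paper's own proof: the same two-sided estimate with antisymmetry for the extension property, the same set-inclusion argument for isotonicity, and the same sandwich argument for uniqueness. (Only a terminological quibble: $M$ is a complete preordered set in the sense of Section~\ref{sec:definition}, not necessarily a complete lattice, but your use of the existing infima and suprema is exactly what the definition provides.)
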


\begin{proof} 
$\varphi^*$ is a continuation:
Let $f\in K$. 
Since 
$\varphi(f) \in \lbrace\, \varphi(a) \,|\, a\in K \,,\, a \le f \,\rbrace \le \varphi(f)$ we have 
$\varphi(f) \le \varphi_*(f) \le \varphi(f)$\,,
so $\varphi(f)=\varphi_*(f)$\,, because $M$ is complete (so $\le$ is antisymmetric). Similar: $\varphi(f)=\varphi^*(f)$\,, so $f\in\overline K$. 
$\varphi^*$ is isotonic:
Let $f,g\in\overline K$ and $f\le g$\,. 
Then $\lbrace\, \varphi(b) \,|\, g\le b\in K \,\rbrace \subseteq 
\lbrace\, \varphi(b) \,|\, f\le b\in K \,\rbrace$\,, so 
$\varphi^*(f) \le \varphi^*(g)$\,. 
$\varphi^*$ is unique:
Let $\psi: \overline K \longrightarrow M$
be an isotonic continuation of $\varphi$ on $\overline K$\,, and $f\in\overline K$\,. 
For all $a,b\in K$ with $a\le f\le b$ we have
$\varphi(a) = \psi(a) \le \psi(f) \le \psi(b) = \varphi(b)$\,, so
$\varphi^*(f) = \varphi_*(f) \le \psi(f) \le \varphi^*(f)$\,, so
$\varphi^*(f) = \psi(f)$\,, because $M$ is complete. 
\end{proof}

Now we show that Definition \ref{def:phiAbschluss} contains all notions mentioned at the beginning:

\subsection{Limit of nets in metric spaces}
\label{subsec:limit}

Idea: Constant sequences have an obvious limit.
Let $(\Lambda,\prec)$ be a directed set (a preordered set with the additional  property that every pair of elements has an upper bound),
$L$ the set of all functions $f:\Lambda\longrightarrow\R$\,, $K$ the set of all constant $f\in L$ and 
$\varphi:K\longrightarrow \R \,,\, f \longmapsto f(n)$ for an $n\in\Lambda$\,. 
The elements of $L$ are called real nets, and $L$ is the set of all real sequences if $(\Lambda,\prec) = (\N,\le)$.

A decisive idea behind convergence is that we are not interested in the {\em whole} sequence, but only in the fact that a sequence is {\em finally} between some bounds. 
We specify this idea with a preorder {\em finally less or equal}\index{finally less} (cf. \cite{Mos}, p. 13): 
For $f,g\in L$ let
\[
 f \fin g
 \quad :\Leftrightarrow\quad
 \exists\, N\in\Lambda \ \,\forall\, n\succ N :\, f(n) \le g(n) \ .
\]
$(L,\fin)$ is a preordered set, $(\R,\le)$ complete and $\varphi$ isotonic, so all assumptions of Definition \ref{def:phiAbschluss} are fulfilled.
The $\varphi$-closure $\overline K$ is the set of all convergent $f\in L$\,, 
and $\varphi^*(f) = \lim_{\Lambda}(f)$ is the limit of $f\in\overline K$.


A generalization: 
Convergence of nets in metric spaces $(X,d)$ can be defined with the help of convergent real nets: 
A net $f: \Lambda\longrightarrow X$ has limit $x\in X$ if and only if the real net 
$n \longmapsto d(f(n),x)$ has limit $0$\,.

\subsection{Continuous mappings in metric spaces}
\label{subsec:continuous}

Idea: Constant mappings are continuous. 
Let $X$ be a topological space, $U\subseteq X$ a neighbourhood of $p\in X$\,, $L$ the set of all functions $f:U\longrightarrow\R$\,, 
$K$ the set of all constant functions $f\in L$ and 
$\varphi:K\longrightarrow \R \,,\, f \longmapsto f(p)$\,.

A decisive idea behind continuity is that we are only interested in the fact that values are {\em locally} between some bounds. 
We specify this idea with a preorder {\em locally less or equal}\index{locally less}\,: 
For $f,g\in L$\, let
\[
 f \loc g
 \quad :\Leftrightarrow\quad
 \exists \mbox{ neighbourhood $V\!$ of } p \ \ \forall\, x\in U\cap V :\, 
 f(x) \le g(x) \ .
\]
The equivalence relation \ $f \locae g \ :\Leftrightarrow\ f\loc g\loc f$ \ says that $f$ and $g$ are equal in a neighbourhood of $p$\,, 
so the equivalence classes are function-germs at $p$\,. 
$(L,\loc)$ is a preordered set, $(\R,\le)$ complete and $\varphi$ isotonic, so all assumptions of Definition \ref{def:phiAbschluss} are fulfilled.
For all $K$-bounded $f\in L$ we have 
\[
 f \mbox{ continuous at } p
\quad\Longleftrightarrow\quad
\forall\, \epsilon\in\R^+: |f - f(p)| \loc \epsilon
\quad\Longleftrightarrow\quad
\varphi_*(f) = \varphi^*(f) \ ,
\]
so $\overline K$ is the set of all $f\in L$ which are continuous at $p$\,.

A generalization: 
Continuity of mappings with values in metric spaces $(X,d)$ can be defined with the help of real functions: 
A mapping $f: U\longrightarrow X$ is continuous at $p\in U$ if and only if the real function $x \longmapsto d(f(x),f(p))$ is continuous at $p$\,.

\subsection{Limit and continuity in topological spaces}
\label{subsec:Topologie}

Idea: A filter converges if it is finer then a neighbourhood filter. 
Let $X$ be a topological space. The neighbourhood filter of $x\in X$ is denoted by ${\cal U}(x)$\,, and ${\cal P}(X)$ is the powerset of $X$. 
Let $L$ be the set of all filters in $X$, 
$K:= \lbrace\, {\cal F}\in L \ |\ \exists\, x\in X\!: \,
{\cal U}(x)\subseteq {\cal F} \,\rbrace$ and 
$\varphi: K\longrightarrow {\cal P}(X) \,,\, 
{\cal F} \longmapsto \bigcap\limits_{F\in{\cal F}} \overline{F}$.
The set $\varphi({\cal F})$ is called set of limits of ${\cal F}\in K$. 
If $X$ is a Hausdorff space and ${\cal F} \supseteq{\cal U}(x)$, then $\varphi({\cal F})=\lbrace x\rbrace$\,. 
$(L,\subseteq)$ is a preordered set, $({\cal P}(X),\supseteq)$ complete and $\varphi$ isotonic, so all assumptions of Definition \ref{def:phiAbschluss} are fulfilled. 
The mapping $\varphi$ can not be continued in the sense of Definition \ref{def:phiAbschluss}: 
All $K$-bounded ${\cal F}\in L$ are elements of $K$, so $\overline K=K$.

Continuity can also be expressed with the help of the preorder $\subseteq$\,: 
Let $Y$ be a topological space. A mapping $f:X\longrightarrow Y$ is continuous at $x\in X$ if and only if \,${\cal U}(f(x)) \subseteq f({\cal U}(x))$\,.

\subsection{Differential}
\label{subsec:Differential}

Idea: The differential of a linear function $f$\, is $f$\,. 
Let $n\in\N$\,, $U\subseteq \R^n$ a neighbourhood of $0\in\R^n$,
$L$ the set of all functions $f:U\longrightarrow\R$ with $f(0)=0$\,, 
$M$ the set of all linear functions $f:\R^n\longrightarrow\R$\,, 
$i$ the inclusion $U\hookrightarrow\R^n$, 
$K$ the set of all $f\circ i$ with $f\in M$ 
and $\varphi: K\longrightarrow M \,,\, f\circ i \longmapsto f$ (unique linear continuation on $\R^n$). 

A decisive idea behind the derivative in dimension $n=1$ could be formulated as follows. A tangent line for $f$ at 0 is a straight line $y=mx$ with a special local property (see figure):  
Every straight line $y=\overline{m}x$ with $\overline{m}>m$ is locally below the graph of $f$ for $x<0$ and locally above the graph of $f$ for $x>0$\,; and in case $\overline{m}<m$ ``below'' and ``above'' are reversed. This means that there is an $a\in K$ such that \ 
$
 a - \epsilon\cdot |id| \ \loc\ f \ \loc\ a + \epsilon\cdot |id| 
 \mbox{ for all } \epsilon\in\R^+  
$
and $\varphi(a) = df_0$\,.

\begin{figure}[htbp]
	\centering
		\includegraphics[width=0.30\textwidth, angle=-90]
		{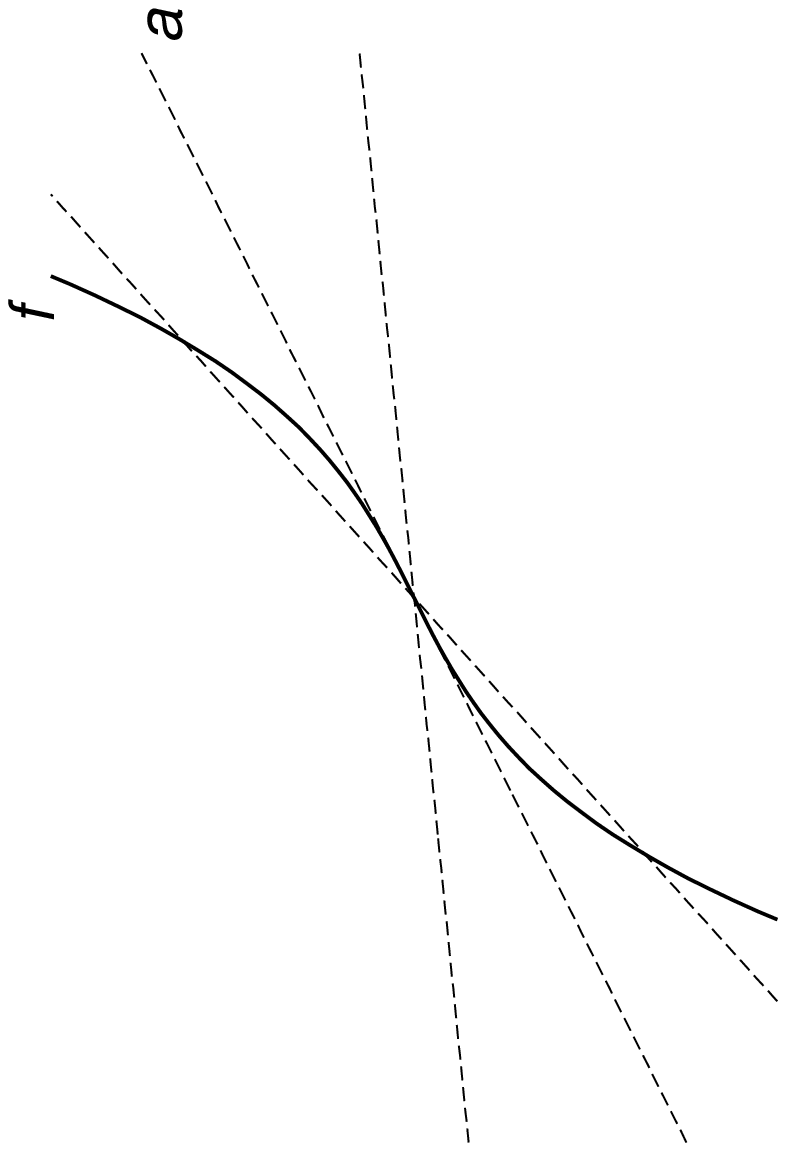}
	\label{fig:diff1} 
\end{figure}

This idea can be generalized to all $n\in\N$\,: We define a preorder {\em tangential less or equal}\index{tangential less}
\[
 f \tang g
 \quad :\Leftrightarrow\quad
 \forall\,\epsilon\in\R^+:\, 
 f \loc g + \epsilon\cdot \left\| id \right\| 
\qquad
\mbox{for } f,g\in L \mbox{ resp. } f,g\in M
\]
and the equivalence relation {\em tangential equivalent}\index{tangential equivalent} \ 
$f \tangae g \ :\Leftrightarrow\ f\tang g\tang f$\,. 
We have
\begin{enumerate}
\renewcommand{\theenumi}{\alph{enumi}}
\renewcommand{\labelenumi}{\rm(\theenumi)}
	\item $a\tang b \ \Longrightarrow\ a=b$ \quad for all $a,b\in K$ resp. $a,b\in M$  
	\item $f \tangae g 
	 \quad\Longleftrightarrow\quad 
	 \displaystyle\lim\limits_{x\rightarrow0}\frac{f(x)-g(x)}{\|x\|} 
	 =0$  
	 \qquad for all $f,g\in L$.
\end{enumerate}
It follows that every subset of $M$ which contains more than one element is unbounded with regard to $\tang$, so $M$ is complete. All assumptions of Definition \ref{def:phiAbschluss} are fulfilled, and for all $f\in L$ we have
\[
 f \mbox{ is $K$-bounded }
 \quad\Longleftrightarrow\quad 
 \exists\, a,b\in K :\, a\tang f\tang b
 \quad\Longleftrightarrow\quad 
 \exists\, a\in K :\, f \tangae a
\]
according to statement (a), so $f\in L$ is $K$-bounded if and only if its total differential $df_0$ exists. 
Furthermore, $\varphi_*(f) = \varphi(a) = \varphi^*(f)$ for $f\in L$ and $a\in K$ with $f\tangae a$\,, so $\overline K$ is the set of all $f\in L$ which are totally differentiable at $0$\,, and $\varphi^*(f)$ is the differential $df_0$\,.

\subsection{Riemann integral}
\label{subsec:Riemann}

Idea: Step functions with a finite number of steps have an obvious integral.
Let $n\in\N$\,, $\iota$ the Jordan measure on $\R^n$ (cf. \cite{Wal} 7.4) and $D\subseteq\R^n$ Jordan measurable. 
A function $f:D\longrightarrow\R$ is called a (Jordan) step function, if $f(D)$ is finite and $f^{-1}(y)$ is Jordan measurable for all $y\in f(D)$\,. 
Let $L$ be the set of all functions $f:D\longrightarrow\R$\,, $K$ the set of all step functions in $L$\,, $\varphi: K\longrightarrow\R$\,, $f\longmapsto
\sum\limits_{y\in f(D)} y\cdot \iota\big(f^{-1}(y)\big)$ 
and for all $f,g\in L$ let
$f\le g \ :\Leftrightarrow\ \forall\,x\in D:\, f(x)\le g(x)$\,. 
Then $(L,\le)$ is a preordered set, $(\R,\le)$ complete and $\varphi$ isotonic, so all assumptions of Definition \ref{def:phiAbschluss} are fulfilled.
$\overline K$ is the set of all Riemann integrable $f\in L$\,, and $\varphi^*(f)$ is the Riemann integral of $f$\,.

\subsection{Lebesgue integral}
\label{subsec:Lebesgue}

Idea: Step functions with an at most countable number of steps have an obvious integral.
Let $(X, {\cal A}, \mu)$ be a measurable space and $D\in \cal A$\,. 
A function $f:D\longrightarrow\R$ is called a $\mu$-step function, if $f(D)$ is at most countable, $f^{-1}(y)\in{\cal A}$ for all $y\in f(D)$ and 
$\varphi(f) := \sum\limits_{y\in f(D)} y\cdot \mu\big(f^{-1}(y)\big)$ converges absolutely. 
Let $L$ be the set of all functions $f:D\longrightarrow\R$\,, $K$ the set of all $\mu$-step functions in $L$ and 
$\varphi: K\longrightarrow\R$\,, $f\longmapsto\varphi(f)$\,.

A decisive idea behind the Lebesgue integral is that we are only interested in the fact that a function is {\em $\mu$-almost everywhere} between $\mu$-step functions. 
We specify this idea with a preorder {\em $\mu$-almost everywhere less or equal}\index{almost less}\,: 
For $f,g\in L$\, let
\[
f \fue g \quad :\Leftrightarrow \quad
\mu\big( \lbrace\, x\in D \,|\, f(x) > g(x) \,\rbrace \big) = 0 \ .
\]
$(L,\fue)$ is a preordered set, $(\R,\le)$ complete and $\varphi$ isotonic, so all assumptions of Definition \ref{def:phiAbschluss} are fulfilled. 
The $\varphi$-closure $\overline{K}$ is the set of all Lebesgue integrable $f\in L$\,, and $\varphi^*(f)$ is the Lebesgue integral of $f\in\overline{K}$\,:
A function $f\in L$ is Lebesgue integrable with Lebesgue integral $J(f)$ if and only if (cf. \cite{Wal} 9.7(e) and 9.32.1)
\begin{enumerate}
\renewcommand{\theenumi}{\alph{enumi}}
\renewcommand{\labelenumi}{\rm(\theenumi)}
	\item $\exists\, \pi^*\in P_D:\, S(\pi^*,|f|) < \infty$ 
	\item $J_*(f)=J^*(f)=J(f)$\,.
\end{enumerate}
$P_D$ is the set of all (measurable and countable) partitions of $D$, 
$S(\pi^*,|f|) := \sum\limits_{A\in\pi^*} \sup( |f|(A) ) \cdot \mu(A)$\,,
$J_*$ the lower and $J^*$ the upper integral. 
We have
\begin{enumerate}
\renewcommand{\theenumi}{\alph{enumi}}
\renewcommand{\labelenumi}{\rm(\theenumi)}
	\item $\exists\, \pi^*\in P_D:\, S(\pi^*,|f|) < \infty 
	 \quad\Longleftrightarrow\quad
	 f$ is $K$-bounded.
	\item $J_*(f)=\varphi_*(f)$ and $J^*(f)=\varphi^*(f)$ 
	 for all $K$-bounded $f$.
\end{enumerate}

A generalization: 
Let $V$ be a normed real vector space and $V'$ its dual space (the set of all continuous and linear functions $V\longrightarrow\R$). A mapping $f:D\longrightarrow V$ is called integrable, if
\begin{enumerate}
\renewcommand{\theenumi}{\alph{enumi}}
\renewcommand{\labelenumi}{\rm(\theenumi)}
	\item $\forall\, \alpha\in V': \alpha\circ f \in \overline{K}$ 
	\item $\exists\, v\in V \ \forall\, \alpha\in V': 
	 \alpha(v)=\varphi^*(\alpha\circ f)$\,.
\end{enumerate}
In this case $v$ is unique,
because the canonical mapping $V\longrightarrow V''$ is injective. 
We call $\int{f}d\mu := v$ the integral of $f$.

\subsection{Roots}
\label{subsec:Wurzeln}

Idea: The $n$-th root of $x^n$ is $x$. 
Let $L:=M:=\R^+$, $n\in\N$\,, 
$K:=\lbrace x^n \,|\, x\in\Q^+ \rbrace$
and $\varphi: K\longrightarrow \R^+$, $x^n\longmapsto x$\,. 
In this case, Definition\,\ref{def:phiAbschluss} formulates the idea of computing roots through nested intervalls:
For all $f\in\R^+$ we have 
\[
 \lbrace\, \varphi(a) \,|\, a\in K \mbox{ and } a\le f \,\rbrace = 
 \lbrace\, x\in\Q^+ \,|\, x^n\le f \,\rbrace \ ,
\] 
\[
 \lbrace\, \varphi(b) \,|\, b\in K \mbox{ and } f\le b \,\rbrace = 
 \lbrace\, y\in\Q^+ \,|\, f\le y^n \,\rbrace \ ,
\]
so $\varphi_*(f) = \varphi^*(f) = \sqrt[n]{f\,}$
and $\overline{K}=\R^+$.

\subsection{Exponential functions}
\label{subsec:Exponential}

Let $K:=\Q$\,, $L:= \R$\,, $M:=\R^+$, $u>1$
and $\varphi: \Q\longrightarrow \R^+ ,\, r\longmapsto u^r$. 
For all $f\in\R$ we have 
\[
 \varphi_*(f) = 
 \sup\lbrace\, u^a \,|\, a\in \Q \mbox{ and } a\le f \,\rbrace = 
 \inf\lbrace\, u^b \,|\, b\in \Q \mbox{ and } f\le b \,\rbrace = 
 \varphi^*(f) = u^f \ .
\]
Especially: $\R$ is the $\varphi$-closure of $\Q$\,.


\section{Algebraic properties}
\label{sec:Algebra}

Now we show that properties like additivity or linearity of all notions mentioned  at the beginning follow from one single theorem (after one has realized that the corresponding $K,L,M$ are special cases of Definition \ref{def:Gruppe}).

\begin{definition}
\label{def:Gruppe}
A {\em preordered group}\,\index{group} is a group $(G,+)$ with a {\em compatible}\, preorder $\le$\,, which means that for all $a,b,c,d\in G$ we have
\begin{equation}
\label{eq:Gruppe}
 a\le b \mbox{ and } c\le d  \quad\Longrightarrow\quad a+c \le b+d \ .
\end{equation}
A {\em preordered real vector space}\,\index{vector space} is a real vector space $V$ with a {\em compatible}\, preorder $\le$\,, which means that in addition to (\ref{eq:Gruppe}) we have for all $a,b\in V$:
\[
 a\le b \mbox{ and } \lambda\in\R^+   \quad\Longrightarrow\quad 
\lambda a \le \lambda b \ .
\]
\end{definition}

\begin{example}
\label{bsp:group}
\begin{enumerate}
\renewcommand{\theenumi}{\alph{enumi}}
\renewcommand{\labelenumi}{\rm(\theenumi)}
	\item Except for section \ref{subsec:Topologie} (the question of additivity does not arise there), we have in all sections from \ref{subsec:limit} to \ref{subsec:Exponential}: $L$ and $M$ are preordered groups, $K$ is a subgroup of $L$ and $\varphi$ is an isotonic group homomorphism.
	\item In sections \ref{subsec:limit}, \ref{subsec:continuous}, \ref{subsec:Differential}, \ref{subsec:Riemann} and \ref{subsec:Lebesgue} we have: $L$ and $M$ are preordered vector spaces, $K$ is a subspace of $L$ and $\varphi$ is isotonic and linear.
\end{enumerate}
\end{example}

In every complete preordered group $(M,+,\le)$, \,$\inf$ and \,$\sup$ are compatible with $+$\,, which means that for all nonempty and bounded $A,B\subseteq M$ we have
\[
 \sup(-A) = -\inf(A)
 \quad , \quad
 \inf(A+B) = \inf(A) + \inf(B)
 \quad\mbox{ and }\quad
 \sup(A+B) = \sup(A) + \sup(B) \, .
\]
In every complete preordered real vector space $M$ we have for all $\lambda\in\R^+$\,:
\[
 \sup(\lambda A) = \lambda \sup(A)
\qquad\mbox{and}\qquad
 \inf(\lambda A) = \lambda \inf(A) \ .
\]

\begin{theorem}
\label{satz:Hom}
\begin{enumerate}
\renewcommand{\theenumi}{\alph{enumi}}
\renewcommand{\labelenumi}{\rm(\theenumi)}
	\item Let $L$ and $M$ be preordered groups, $M$  
	 complete, $K$ a subgroup of $L$ and
   $\varphi:K\longrightarrow M$ an isotonic group homomorphism. 
	 Then $\varphi^*: \overline{K}\longrightarrow M$ is an isotonic group homomorphism. 
	\item Let $L$ and $M$ be preordered real vector spaces, $M$  
	 complete, $K$ a subspace of $L$\,,
   $\varphi:K\longrightarrow M$ isotonic and linear. 
	 Then $\varphi^*: \overline{K}\longrightarrow M$ is isotonic and linear.
\end{enumerate}
\end{theorem}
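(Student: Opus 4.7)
The plan is to reduce both parts to squeeze inequalities built from the compatibility statements made just before the theorem: in a complete preordered group $\inf$ and $\sup$ commute with $+$ and $-$, and in the vector space case also with positive scalar multiplication. Isotony of $\varphi^*$ on $\overline{K}$ is already handed to us by Theorem~\ref{satz:phiAbschluss}, so only the algebraic identities together with the closure of $\overline{K}$ under the relevant operations remain to be established.

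For additivity in (a), take $f, g \in \overline{K}$. Because $K$ is a subgroup and $\le$ on $L$ is compatible, whenever $a, c \in K$ with $a \le f$ and $c \le g$ we have $a+c \in K$ and $a+c \le f+g$; moreover $\varphi(a+c) = \varphi(a) + \varphi(c)$. The identity $\sup(A+B) = \sup A + \sup B$ in $M$ then yields
\[
 \varphi^*(f) + \varphi^*(g) \;=\; \varphi_*(f) + \varphi_*(g) \;\le\; \varphi_*(f+g) \;\le\; \varphi^*(f+g) \;\le\; \varphi^*(f) + \varphi^*(g),
\]
the last step being the dual argument with upper bounds and $\inf(A+B) = \inf A + \inf B$. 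All four terms coincide, which simultaneously forces $f+g \in \overline{K}$ and $\varphi^*(f+g) = \varphi^*(f) + \varphi^*(g)$. For inverses I would use that $a \mapsto -a$ is an order-reversing bijection of $K$ with itself, sending lower bounds of $-f$ bijectively onto upper bounds of $f$; combined with $\sup(-A) = -\inf A$ in $M$ this gives $\varphi_*(-f) = -\varphi^*(f)$ and dually $\varphi^*(-f) = -\varphi_*(f)$, so $-f \in \overline{K}$ and $\varphi^*(-f) = -\varphi^*(f)$.

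Part (b) proceeds along the same lines: for $\lambda \in \R^+$ the identity $\sup(\lambda A) = \lambda \sup A$ delivers $\lambda f \in \overline{K}$ and $\varphi^*(\lambda f) = \lambda \varphi^*(f)$; the case $\lambda < 0$ is reduced to $\lambda > 0$ via the additive inverses already handled, and $\lambda = 0$ is trivial since $0 \in K$. The main obstacle I foresee is purely bookkeeping: one has to verify that the set $\{\,\varphi(a) + \varphi(c) \,|\, a,c \in K,\; a \le f,\; c \le g\,\}$ really coincides with the set-sum $\{\,\varphi(a) \,|\, a \le f\,\} + \{\,\varphi(c) \,|\, c \le g\,\}$ in $M$, so that the external $\sup$/$\inf$ identities can be applied. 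This is immediate from $\varphi$ being a homomorphism, but it is the hinge on which the whole squeeze turns.
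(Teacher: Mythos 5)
Your proposal is correct and follows essentially the same route as the paper: the same four-term squeeze $\varphi_*(f)+\varphi_*(g)\le\varphi_*(f+g)\le\varphi^*(f+g)\le\varphi^*(f)+\varphi^*(g)$ for additivity, the same negation-as-order-reversing-bijection argument giving $\varphi_*(-f)=-\varphi^*(f)$ and $\varphi^*(-f)=-\varphi_*(f)$, and the same positive-scalar computation for (b). The ``hinge'' you flag at the end (identifying the image of sums with the set-sum of images) is exactly the set inclusion the paper writes down as its first display, so no gap remains.
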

\[\begin{xy}
\xymatrix{
  K \ar[r]^{\subseteq} \ar[rd]_{\varphi} 
  & 
  {\overline K} \ar[r]^{\subseteq} \ar@{.>}[d]^{\varphi^*}
  & 
  L
  \\ 
  & M}
\end{xy}\]
\begin{proof}
(a) Let $f,g\in \overline{K}$. Then $f+g$ is $K$-bounded and
\[
 \lbrace\, \varphi(a_1) \,|\, a_1\in K , a_1\le f \,\rbrace
 +
 \lbrace\, \varphi(a_2) \,|\, a_2\in K , a_2\le g \,\rbrace
 \ \subseteq\ 
 \lbrace\, \varphi(a) \,|\, a\in K , a\le f+g \,\rbrace \ ,
\]
so $\varphi_*(f) + \varphi_*(g) \le \varphi_*(f+g)$\,. 
Similar: $\varphi^*(f+g) \le \varphi^*(f) + \varphi^*(g)$\,, so
\[
 \varphi_*(f) + \varphi_*(g) \le \varphi_*(f+g) 
\le \varphi^*(f+g) \le \varphi^*(f) + \varphi^*(g) 
= \varphi_*(f) + \varphi_*(g) \ ,
\]
and so $\varphi_*(f+g) = \varphi^*(f+g) = \varphi^*(f) + \varphi^*(g)$\,, 
because $M$ is complete (and $\le$ antisymmetric).
It remains to show that $\varphi_*(-f) = \varphi^*(-f)$\,. 
The inverse $-f$ is also $K$-bounded, and
\[
 \lbrace\, \varphi(a) \,|\, a\in K , a\le -f \,\rbrace
\ =\ 
-\lbrace\, \varphi(-a) \,|\, a\in K , f\le -a \,\rbrace
\ = \ 
-\lbrace\, \varphi(b) \,|\, b\in K , f\le b \,\rbrace \ ,
\]
so $\varphi_*(-f) = -\varphi^*(f)$ and $\varphi^*(-f) = -\varphi_*(f)$\,, which implies $\varphi_*(-f) = \varphi^*(-f)$\,.
\\[7pt]
(b) Let $f\in \overline K$ and $\lambda\in\R^+$.
Then $\lambda f$ is $K$-bounded and
\[
 \lbrace\, \varphi(a) \,|\, a\in K , a\le \lambda f \,\rbrace
 =
 \lambda\lbrace\, \varphi(\lambda^{-1}a) \,|\, a\in K , \lambda^{-1} a\le f \,\rbrace
 =
 \lambda\lbrace\, \varphi(a') \,|\, a'\in K , a'\le f \,\rbrace \ ,
\]
so $\varphi_*(\lambda f) = \lambda \varphi_*(f)$\,.
Similar: $\varphi^*(\lambda f) = \lambda \varphi^*(f)$\,, so 
$\varphi_*(\lambda f) = \varphi^*(\lambda f)$\,. 
\end{proof}

It follows that $\varphi^*$ is a group homomorphism resp. linear in all examples of  \ref{bsp:group}(a) resp. (b). 
Furthermore, $\varphi^*$ is a ring homomorphism in sections \ref{subsec:limit} and \ref{subsec:continuous} :

\begin{corollary}
\label{kor:Hom}
	In addition to the assumptions of Theorem \ref{satz:Hom}(a) let $L$ and $M$ be rings, $K$ a subring of $L$\,, $K+\ker\varphi^*=\overline K$\,, $f\cdot(\ker\varphi^*)\subseteq \ker\varphi^*$ and $(\ker\varphi^*)\cdot f \subseteq \ker\varphi^*$ for all $K$-bounded $f\in L$\,, and $\varphi:K\longrightarrow M$ an isotonic ring homomorphism. Then $\varphi^*: \overline{K}\longrightarrow M$ is an isotonic ring homomorphism.
	\hfill $\Box$
\end{corollary}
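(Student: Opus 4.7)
The plan is to lean on Theorem~\ref{satz:Hom}(a), which already supplies that $\varphi^*$ is an isotonic group homomorphism on $\overline{K}$, since the hypotheses of the corollary contain those of the theorem. What remains is multiplicativity: $\varphi^*(fg)=\varphi^*(f)\varphi^*(g)$ for all $f,g\in\overline{K}$.

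To get at this, I would exploit the decomposition hypothesis $K+\ker\varphi^*=\overline{K}$ and write $f=a+x$, $g=b+y$ with $a,b\in K$ and $x,y\in\ker\varphi^*$. Expanding in the ring $L$ gives $fg=ab+ay+xb+xy$. Any $a\in K$ is $K$-bounded (via $a\le a\le a$), and any element of $\ker\varphi^*\subseteq\overline{K}$ is $K$-bounded by the very definition of the $\varphi$-closure, so the hypotheses $f\cdot\ker\varphi^*\subseteq\ker\varphi^*$ and $\ker\varphi^*\cdot f\subseteq\ker\varphi^*$ apply to $a$, $b$ and to $x$. This makes each of $ay$, $xb$, $xy$ lie in $\ker\varphi^*\subseteq\overline{K}$, while $ab\in K\subseteq\overline{K}$; hence $fg\in\overline{K}$ and $\varphi^*(fg)$ is defined.

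Now I would invoke the additivity already established in Theorem~\ref{satz:Hom}(a) to compute
\[
 \varphi^*(fg)=\varphi^*(ab)+\varphi^*(ay)+\varphi^*(xb)+\varphi^*(xy)=\varphi(ab)+0+0+0=\varphi(a)\,\varphi(b),
\]
using that $\varphi$ is a ring homomorphism on $K$ and that the last three summands vanish by choice of $x,y$. Symmetrically, $\varphi^*(f)=\varphi^*(a)+\varphi^*(x)=\varphi(a)$ and $\varphi^*(g)=\varphi(b)$, so $\varphi^*(f)\varphi^*(g)=\varphi(a)\varphi(b)=\varphi^*(fg)$, which is the desired identity.

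The only real obstacle I foresee is verifying that the absorption hypothesis genuinely applies to each of the four cross-terms; the check reduces to the two $K$-bounded\-ness observations above, which are small but essential. If the rings are taken to be unital and $\varphi$ is required to preserve $1$, the equality $\varphi^*(1)=\varphi(1)=1$ is immediate because $1\in K\subseteq\overline{K}$ and $\varphi^*$ extends $\varphi$, so no separate argument is needed.
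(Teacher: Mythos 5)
Your proof is correct and is evidently the argument the corollary's hypotheses are designed for (the paper itself leaves the proof to the reader): decompose $f=a+x$, $g=b+y$ via $K+\ker\varphi^*=\overline K$, absorb the cross-terms $ay, xb, xy$ into $\ker\varphi^*$ using the two absorption hypotheses together with the $K$-boundedness of $a$, $b$ and $x$, and then apply the additivity from Theorem~\ref{satz:Hom}(a). The two small verifications you flag --- that elements of $K$ are $K$-bounded by reflexivity and that $\ker\varphi^*\subseteq\overline K$ consists of $K$-bounded elements by definition --- are exactly the points that need saying, and you say them.
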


The proof that $\ker \varphi^*$ in sections \ref{subsec:limit} and \ref{subsec:continuous} fulfills the assumptions of Corollary \ref{kor:Hom}  
can be formulated in the language of preordered sets in a quite compact way:
Let $K,L,M$ and $\varphi$ be as in sections \ref{subsec:limit} resp. \ref{subsec:continuous}. The equation $K+\ker\varphi^*=\overline K$ is obvious. 
Let $g\in\ker\varphi^*$ and $f\in L$ be $K$-bounded. Then $fg\in\ker\varphi^*$:
\\
Let $\epsilon\in K$ and $\varphi(\epsilon)>0$\,. 
By assumption there is an element $b\in K$ with $\varphi(b)>0$ and $-b \le f \le b$ (finally resp. locally). 
Since $g\in\ker\varphi^*$ we have  
$-\frac{\epsilon}{b} \le g \le \frac{\epsilon}{b}$\,, 
so $-\epsilon \le fg \le \epsilon$\,.


\section{Generalisation}
\label{sec:generalisation}

So far we considered notions which are defined through approximations from above and below. Now we generalize Definition \ref{def:phiAbschluss} since many notions are only defined through approximations from above. 
A preordered set $(M,\le)$ is called {\em complete from below}\index{complete}\,, if every nonempty subset has an unique infimum. In this case, $\le$ is antisymmetric.

\[\begin{xy}
\xymatrix{
  K \ar[r]^{\subseteq} \ar[rd]_{\varphi} 
  & 
  {K^*} \ar[r]^{\subseteq} \ar@{.>}[d]^{\varphi^*}
  & 
  L
	&
	&
	{\cal L} \ar[r]^{\subseteq} \ar[rd]_{id_{\cal L}} 
  & 
  {\cal L}^* \ar[r]^{\subseteq} \ar@{.>}[d]^{id_{\cal L}^{\,*}}
  & 
  {\cal P}(L)
	&
  \\ 
  & 
	M
	&&&&
	{\cal L}}
\end{xy}\]
\begin{definition}
\label{def:L-closure}
\ 
\begin{enumerate}
\renewcommand{\theenumi}{\alph{enumi}}
\renewcommand{\labelenumi}{\rm(\theenumi)}
	\item Let $L$ and $M$ be preordered sets, $M$ complete from below, $K\subseteq L$ and
$\varphi:K\longrightarrow M$ isotonic. 
	 Then $K^* := \lbrace f\in L\,|\, \exists\, b\in K: f\le b \rbrace$ and 
	$\varphi^*\!: K^*\longrightarrow M \,,\, f\longmapsto 
	\inf \lbrace\, \varphi(b) \,|\, f \le b\in K  \,\rbrace$\,. 
	\item Let $L$ be a set, ${\cal L}\subseteq{\cal P}(L)$ complete from below (with respect to $\subseteq$) and $K\in{\cal L}^*$. 
	 The set $id_{\cal L}^{\,*}(K) = \inf \lbrace\, B\in{\cal L} \,|\, K\subseteq B \,\rbrace$ is called {\em $\cal L$-closure of $K$}\index{closure}\,. 
	\item Let $K,L,M,\varphi$ and $\cal L$ be as in (a) and (b) and $id_{\cal L}^{\,*}(K) \subseteq K^*$. Then the restriction of $\varphi^*$ to $id_{\cal L}^{\,*}(K)$ is called {\em $\cal L$-continuation of $\varphi$}\index{continuation}\,. 
\end{enumerate}
\end{definition}

\begin{theorem}
\label{satz:L-closure}
Let $K,L,M$ and $\varphi$ be as in Definition \ref{def:L-closure} (a).
Then $\varphi^*$ is an isotonic continuation of $\varphi$ on $K^*$.
\end{theorem}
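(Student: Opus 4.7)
The plan is to mimic the three-part structure of the proof of Theorem \ref{satz:phiAbschluss}, dropping the parts that involved $\varphi_*$ and lower bounds since here we work only with infima of upper approximations. Three things need to be checked: (i) $\varphi^*$ is well-defined as a map into $M$; (ii) $\varphi^*$ restricts to $\varphi$ on $K$; (iii) $\varphi^*$ is isotonic.

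For (i), I would observe that for every $f\in K^*$ the set $S_f := \lbrace\,\varphi(b)\,|\,f\le b\in K\,\rbrace$ is nonempty by the very definition of $K^*$, and since $M$ is complete from below, $\inf S_f$ exists and is unique. This makes $\varphi^*(f)$ well-defined; antisymmetry of $\le$ on $M$ (noted in the paragraph preceding the theorem) guarantees uniqueness.

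For (ii), let $f\in K$. Then $f\le f$ gives $f\in K^*$ and $\varphi(f)\in S_f$. Moreover, for every $b\in K$ with $f\le b$, isotonicity of $\varphi$ yields $\varphi(f)\le\varphi(b)$, so $\varphi(f)$ is itself a lower bound of $S_f$ lying in $S_f$; by uniqueness of infima in $M$, $\varphi^*(f)=\varphi(f)$.

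For (iii), take $f,g\in K^*$ with $f\le g$. By transitivity of $\le$ on $L$, every $b\in K$ with $g\le b$ also satisfies $f\le b$, so $S_g\subseteq S_f$, and hence $\inf S_f\le\inf S_g$, that is, $\varphi^*(f)\le\varphi^*(g)$. Nothing here is likely to be an obstacle; the argument is essentially the ``upper half'' of the proof of Theorem \ref{satz:phiAbschluss}, simplified by the fact that we no longer need to reconcile $\varphi_*$ with $\varphi^*$. The only mild subtlety is making sure at step (i) that ``complete from below'' really only requires nonemptiness (not boundedness from below) of $S_f$, which is exactly how that notion is defined just before the theorem.
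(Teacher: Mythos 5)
Your proposal is correct and follows essentially the same route as the paper's proof: the continuation step rests on the observation that $\varphi(f)$ is a lower bound of $S_f$ that also belongs to $S_f$ (the paper writes this as $\varphi(f)\le\lbrace\,\varphi(b)\,|\,f\le b\in K\,\rbrace\owns\varphi(f)$), and the isotonicity step uses the same inclusion of the two sets of upper approximations. Your explicit check of well-definedness via nonemptiness of $S_f$ is a point the paper leaves implicit, but it is the same argument.
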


\begin{proof} 
$\varphi^*$ is a continuation: 
$K\subseteq K^*$ and for all $f\in K$ we have
$\varphi(f) \le \lbrace\, \varphi(b) \,|\, f\le b\in K \,\rbrace \owns \varphi(f)$\,,
so 
$\varphi(f) \le \varphi^*(f) \le \varphi(f)$\,,
so $\varphi^*(f)=\varphi(f)$\,, because $M$ is complete from below.
\\
$\varphi^*$ is isotonic:
Let $f,g\in K^*$ and $f\le g$\,. 
Then $\lbrace\, \varphi(b) \,|\, g\le b\in K \,\rbrace \subseteq 
\lbrace\, \varphi(b) \,|\, f\le b\in K \,\rbrace$\,, so 
$\varphi^*(f) \le \varphi^*(g)$\,. 
\end{proof}

\begin{example}
\label{exa:L-closure}
\begin{enumerate}
\renewcommand{\theenumi}{\alph{enumi}}
\renewcommand{\labelenumi}{\rm(\theenumi)}
	\item The mapping $\varphi^*$ in Definition \ref{def:L-closure} (a) is the $\lbrace K^* \rbrace$-continuation of $\varphi$\,. 
The mapping $\varphi^*$ in Theorem \ref{satz:phiAbschluss} is the $\lbrace \overline K \rbrace$-continuation of $\varphi$\,. 
	\item Let $L$ be a topological space and $\cal L$ the set of all closed subsets of $L$\,. Then ${\cal L}^*={\cal P}(L)$ and the $\cal L$-closure of $K\in{\cal P}(L)$ is the usual closure in topology. 
	\item Let $L$ be a group and $\cal L$ the set of all subgroups of $L$\,. Then ${\cal L}^*={\cal P}(L)$ and the $\cal L$-closure of $K\in{\cal P}(L)$ is the subgroup of $L$ generated by $K$\,. 
	\item Let $\Omega$ be a set and $\cal L$ the set of all $\sigma$-algebras on $\Omega$\,. Then ${\cal L}^*={\cal P}({\cal P}(\Omega))$ and the $\cal L$-closure of $K\in{\cal P}({\cal P}(\Omega))$ is the $\sigma$-algebra generated by $K$\,. 
\end{enumerate}
\end{example}

The construction of an outer measure and a measure with the help of a pre-measure is also a special case of Definition \ref{def:L-closure}:

\begin{example}
\label{exa:measure}
Let $\Omega$ be a set, $R$ a ring of subsets of $\Omega$ and $\mu: R\longrightarrow \lbrack 0 \,, \infty \rbrack$ a pre-measure.
It might be conceivable to extend $\mu$ through
$\mu^*\left( \bigcup\limits_{i=1}^\infty A_i \right) := \sum\limits_{i=1}^\infty \mu(A_i)$ for all sequences $(A_i)_i$ of pairwise disjoint elements of $R$\,. But then it is questionable whether $\mu^*$ is well-defined or not. So we choose an other way:

Let $K$ respectively $L$ be the set of all sequences of pairwise disjoint elements of $R$ respectively ${\cal P}(\Omega)$\,, 
$\varphi: K\longrightarrow \lbrack 0 \,, \infty \rbrack$\,, 
$f \longmapsto \sum_i \mu(f_i)$ 
and $f\le g \,:\Leftrightarrow\, \bigcup_i f_i\subseteq\bigcup_i g_i$ for $f,g\in L$\,. 
Then $L$ and $\lbrack 0 \,, \infty \rbrack$ are preordered, $\lbrack 0 \,, \infty \rbrack$ is complete from below and $\varphi$ is isotonic: 
$\sum\limits_{i=1}^n \mu(f_i) = \mu\left( \bigcup\limits_{i=1}^n f_i \right) = 
\mu\left( \bigcup\limits_{j=1}^\infty \bigcup\limits_{i=1}^n f_i\cap g_j \right) = 
\sum\limits_{j=1}^\infty \mu\left( \bigcup\limits_{i=1}^n f_i\cap g_j \right) \le 
\sum\limits_{j=1}^\infty \mu\left( g_j \right) = \varphi(g)$ 
for all $n\in \N$ and $f,g\in K$\,, $f\le g$\,. 
Every assumption of Definition \ref{def:L-closure} is fulfilled. 
We make the additional assumption that there is an $h\in K$ with $\bigcup_i h_i = \Omega$\,. Then $K^*=L$\,. 
Let $\iota: {\cal P}(\Omega) \hookrightarrow L$\,, $S\mapsto (S,\emptyset,\emptyset,\dots)$\,. Then $\mu^*:= \varphi^*\circ\iota$ is an outer measure and a continuation of $\mu = \varphi\circ\iota\big|_R$ (cf. \cite{Bau} I.5). 
Let $\cal L$ be the set of all $\sigma$-algebras on $\Omega$\,. Then the restriction of $\mu^*$ to the $\cal L$-closure $id_{\cal L}^{\,*}(R)$ is a measure on the $\sigma$-algebra $id_{\cal L}^{\,*}(R)$\,. 
\end{example}

\end{document}